\documentclass[12pt,final]{amsart}
\usepackage{amsmath,amssymb}

\usepackage{cite}

\hoffset-1.5cm \voffset-1cm

\textheight=220truemm \textwidth=154truemm

\textwidth=32cc \baselineskip=16pt

\usepackage{color}
\usepackage{soul,graphicx}

\usepackage[color]{showkeys}
\definecolor{refkey}{rgb}{0,0,1}
\definecolor{labelkey}{rgb}{1,0,0}

\newcommand{\eq} [1] {\begin{equation}\label{#1}\quad}
\newcommand{\en} {\end{equation}}

\newtheorem{theorem}{\bf  Theorem}
\newtheorem{lemma}{\bf  Lemma}

\newtheorem{corollary}{\bf \sc Corollary}

%\subjclass{47A68}

%\keywords{Positively definite matrix function, factorization}

\begin{document}
\begin{center}
{\Large \bf Quantitative results on continuity of the spectral factorization mapping in the scalar case}\\[10mm]Lasha Ephremidze$^{1,3}$,  Eugene Shargorodsky$^2$, and  Ilya Spitkovsky$^{1,4}$\\[5mm]
$^1$ Division of Science and Mathematics, New York University Abu Dhabi,  UAE\\
$^2$ Department of Mathematics, King’s College London,  UK\\
$^3$ Razmadze Mathematical Institute of Tbilisi State University, Georgia\\
$^4$ Department of Mathematics, College of William and Mary, Williamsburg, VA \\[8mm]
{\bf Abstract}
\end{center}
\vskip+0.5cm
In the scalar case, the spectral factorization mapping $f\to f^+$ puts a nonnegative integrable function $f$ having an integrable
 logarithm in correspondence with an outer analytic function $f^+$ such that $f = |f^+|^2$ almost everywhere. The main question addressed here is to what extent $\|f^+ - g^+\|_{H_2}$ is controlled by $\|f-g\|_{L_1}$ and $\|\log f - \log g\|_{L_1}$.

 \vskip+0.5cm

 \section{Introduction}
\label{intro}

Let $f$ be a nonnegative integrable function on the unit circle in the complex plain, $0\leq f\in L_1(\mathbb{T})$, satisfying the Paley-Wiener condition
\begin{equation}\label{le1}
\log f\in L_1(\mathbb{T}).
\end{equation}
Then it admits a spectral factorization
\begin{equation}\label{le2}
f(t)=f^+(t)f^-(t)\;\;\text{ a.e. on }\mathbb{T},
\end{equation}
where $f^+$ is a function analytic inside the unit circle, $f^+\in\mathcal{A}(\mathbb{T}_+)$, and $f^-(z)=\overline{f^+(1/\overline{z})}$, which is analytic  outside the unit circle including the infinity, $f^-\in\mathcal{A}(\mathbb{T}_-)$. More specifically, $f^+$ belongs to the Hardy space $H_2(\mathbb{D})$, therefore its boundary values $f^+(t)=f^+(e^{i\theta})=\lim_{r\to 1}f^+(re^{i\theta})$ exist a.e. and the equation \eqref{le2}  holds for these boundary values. Note also that $f^+=\overline{f^-}$ a.e. on $\mathbb{T}$ and therefore \eqref{le2} is equivalent to $$f(t)=|f^+(t)|^2\;\;\text{ a.e. on }\mathbb{T}.$$

Condition \eqref{le1} is necessary for factorization \eqref{le2} to exist. It also plays an important role in the linear prediction theory of stationary stochastic processes, one of the historically first applications of spectral factorization (see \cite{MR0031213}, \cite{MR0009098}). Namely, let $\ldots, X_{-1}, X_0, X_1,\ldots$ be a stationary stochastic process with the spectral measure $d\mu=f\,dt+d\mu_s$. In a different but equivalent language,
$\{X_n\}_{n\in\mathbb{Z}}$ is a sequence in a Hilbert space and $\langle X_n,X_k\rangle=\frac{1}{2\pi}\int_\mathbb{T} e^{i(n-k)\theta}\,d\mu(\theta)$.
The process is {\em deterministic} if $X_{n+1}$ can be represented as the limit of linear combinations of vectors from $\{\ldots, X_{n-1}, X_n\}$, i.e, $X_{n+1}\in \overline{\mbox{Span}}\{\ldots, X_{n-1}, X_n\}$. As it happens (see e.g. \cite{MR0009098}), condition  \eqref{le1} is necessary and sufficient for the process to be non-deterministic.

Starting with original applications in the prediction theory of stochastic processes, spectral factorization procedure appeared in such seemingly distant areas as singular integral equations \cite{MR0102720}, \cite{MR2663312}, linear estimation \cite{Kai99}, quadratic and $H_\infty$ control \cite{MR0335000}, \cite{MR932459}, \cite{MR2663312}, communications \cite{fisher},  filter design \cite{MR1162107}, \cite{MR1712468}, \cite{MR1411910},  etc.

If we require $f^+$ to be an outer analytic function, then the factorization \eqref{le2} is unique up to a constant factor $c$ with absolute value 1, $|c|=1$. The unique spectral factor which is positive at the origin can be a priori written as
\begin{equation}\label{3}
 f^+(z)=\exp\left(\frac 1{4\pi}
\int\nolimits_0^{2\pi}\frac{e^{i\theta}+z}{e^{i\theta}-z}\log
f(e^{i\theta})\,d\theta\right).
\end{equation}
In most applications, a spectral factor $f^+$ in \eqref{le2} is not explicitly required to be outer and instead is subject to  certain extremal conditions called, in various works, minimal phase or maximal energy, optimal, etc. In mathematical terms, however, they amount to $f^+$ being outer, so seeking the solution \eqref{3} is natural.

From the practical point of view, it is important to study the continuity properties of the spectral factorization map
\begin{equation}\label{4}
f\mapsto f^+
\end{equation}
defined by \eqref{3}. Namely, we are interested in knowing how close  $g^+$ is to $f^+$ when a spectral density $g$ is close to $f$. The reason why we study this question is that usually an estimated spectral density function $\hat{f}$ being dealt with is constructed empirically  from observations and is only an approximation to the theoretically existing spectral density $f$. Therefore we need to know how close $\hat{f}^+$ remains to $f^+$ under such approximation.

An answer to the above question depends on norms we use as a measurement of the accuracy in the spaces of functions and of their spectral factors. Since the boundary values of the function \eqref{3} can be expressed as
\begin{equation*}\label{5}
f^+(t)=\sqrt{f(t)}\exp\left(\frac{i}{2}\widetilde{\log f}(t)\right),
\end{equation*}
where ${}^\sim$ stands for the harmonic conjugation operator
$$
\tilde{h}(e^{i\tau})=(P)\frac{1}{2\pi}\int_0^{2\pi}h(e^{i\theta})\cot(\frac{\tau-\theta}{2})\,d\theta,
$$
and the conjugation is not a bounded operator on $L_\infty$ or $C(\mathbb{T})$, it is not surprising that the map \eqref{4} is not continuous in these spaces \cite{MR827391}. Furthermore, it is shown in  \cite{Boche} that every continuous function on $\mathbb{T}$ is a discontinuity point of the spectral factorization mapping in the uniform norm, whereas in \cite{MR1855307} it was shown that on a large class of function spaces (the so called decomposing Banach algebras) the spectral factorization mapping is continuous.

The spectral factorization of a trigonometric polynomial
\begin{equation}\label{7}
f(t)=\sum_{k=-N}^N c_k t^k,
\end{equation}
which is non-negative on $\mathbb{T}$, has the form
\begin{equation*}\label{8}
f(t)=\sum_{k=0}^N a_k t^k \sum_{k=0}^N \overline{a_k} t^{-k},
\end{equation*}
i.e. the spectral factor $f^+$ is a polynomial of the same degree $N$. This result is known as  the Fej\'{e}r-Riesz lemma (see, e.g. \cite{MR1162107}). The spectral factor can also be expressed in terms of zeros of polynomial \eqref{7}, and therefore the map \eqref{4} is continuous on $\mathcal{P}_N$, the set of all functions of the form \eqref{7}. Papers \cite{MR2446566}, \cite{Boche2} are devoted to estimating the constant $C_N$ in the inequality
$$
\|\phi^+-\psi^+\|_{L_\infty}\leq C_N\|\phi-\psi\|_{L_\infty},\;\;\;\phi, \psi\in\mathcal{P}_N,
$$
and it is shown there that $C_N\sim\log N$ asymptotically, under the condition that  the values of functions $\phi$ and $\psi$ are bounded away from $0$.

Moving to Lebesgue spaces,  the map \eqref{4} is not continuous in the $L_1$ norm in general, since a small change of values of function $f$, if these values are close to $0$, may cause a significant change of $\log f$. However,
\begin{equation}\label{9}
\|f_n-f\|_{L_1}\to 0 \text{ and }\|\log f_n-\log f\|_{L_1}\to 0 \Longrightarrow \|f^+_n-f^+\|_{H_2}\to 0.
\end{equation}
A proof of an analogue of \eqref{9} for more general matrix case can be found in \cite{MR2096876} or \cite{MR2838115}. In the present paper, we discuss  quantitative estimates of the rate in the above convergence. Firstly, we look for  estimates of $\|g^+-f^+\|_{H_2}$ in terms of $\|g-f\|_{L_1}$ and
$\|\log g-\log f\|_{L_1}$. It turns out that, in general, there is no such  estimate. Namely, there is no function
$\Pi : [0, +\infty)^2 \to [0, +\infty)$ such that
$\lim_{s, t \to 0} \Pi(s, t) = 0$ for which the estimate
$$
\|g^+ - f^+\|_{H_2}^2 \leq\Pi\left(\|g - f\|_{L_1}, \|\log g - \log f\|_{L_1}\right)
$$
holds for all $f, g \geq0$ with $\|f\|_{L_1}, \|g\|_{L_1} \leq 1$.
\begin{theorem}\label{No1}
There exist functions $f_n, g_n \ge 0$, $n \in \mathbb{N}$, such that
$$
\|f_n\|_{L_1}, \|g_n\|_{L_1} \le 1, \ \|g_n - f_n\|_{L_1} \le \frac1n\, , \ \|\log g_n - \log f_n\|_{L_1} \le \frac1n\, ,
$$
but $\|g_n^+ - f_n^+\|_{H_2} \ge 2 - 1/n$.
\end{theorem}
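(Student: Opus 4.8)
The plan is to nearly saturate the trivial upper bound and to exploit the unboundedness of harmonic conjugation to create a phase reversal. Since $f=|f^+|^2$ on $\mathbb{T}$, the normalization in \eqref{3} yields the identity $\|f^+\|_{H_2}^2=\|f\|_{L_1}$, so the hypotheses $\|f_n\|_{L_1},\|g_n\|_{L_1}\le1$ force $\|f_n^+\|_{H_2},\|g_n^+\|_{H_2}\le1$ and hence $\|g_n^+-f_n^+\|_{H_2}\le2$ automatically. To reach $2-1/n$ one must arrange
$$
\|g_n^+-f_n^+\|_{H_2}^2=\|f_n^+\|_{H_2}^2+\|g_n^+\|_{H_2}^2-2\operatorname{Re}\langle f_n^+,g_n^+\rangle_{H_2}\longrightarrow4,
$$
that is, $f_n^+\approx-g_n^+$ in $H_2$ with both norms close to $1$. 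Since an outer function is positive at the origin, $f_n^+(0),g_n^+(0)>0$, so this is possible only if $f_n^+(0),g_n^+(0)\to0$, i.e. the geometric means $\exp\!\big(\tfrac1{2\pi}\int\log f_n\big)$ tend to $0$ while the arithmetic means $\|f_n\|_{L_1}$ stay near $1$. This dictates that $f_n$ be strongly \emph{concentrated}.

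Concretely, I would take $f_n$ to be the Poisson kernel $P_{a_n}$ at $a_n=r_ne^{i\pi}$, so that $f_n^+=k_{a_n}$ is the normalized Szeg\H{o} kernel (an outer function, positive at the origin, with $\|f_n^+\|_{H_2}=1$), and $f_n$ concentrates its mass in a small arc around $\theta=\pi$ as $r_n\to1$. For $g_n$ I would flip the phase by an outer multiplier: set $g_n^+=c_n\,k_{a_n}\,S_n$, where $S_n=\exp(i\pi\,\Omega_n)$, $\Omega_n=\omega_n+i\widetilde{\omega_n}$ is the analytic completion of a smooth bump $\omega_n$ that equals $1$ on a small arc $J_n=(\pi-\delta_n,\pi+\delta_n)$, is supported in a slightly larger arc, and is symmetric about $\pi$, while $|c_n|=1$ is chosen to make $g_n^+$ positive at the origin. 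On the boundary this gives $g_n=|g_n^+|^2=f_n\,e^{-2\pi\widetilde{\omega_n}}$ and $\arg(g_n^+/f_n^+)=\pi\omega_n$ (up to the negligible constant $\arg c_n=O(\delta_n)$), so the phase difference is exactly $\pi$ throughout $J_n$.

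The three smallness conditions would then be checked as follows. First, $\log g_n-\log f_n=-2\pi\widetilde{\omega_n}$, and since $\widetilde{\omega_n}$ is the conjugate of a bump of width $O(\delta_n)$ one has $\|\widetilde{\omega_n}\|_{L_1}=O(\delta_n\log(1/\delta_n))\to0$, so $\|\log g_n-\log f_n\|_{L_1}\le1/n$ for $\delta_n$ small. Using the smoothness of $\omega_n$ (so that $\widetilde{\omega_n}\in L_\infty$, which keeps $g_n$ integrable), the symmetry $\widetilde{\omega_n}(\pi)=0$, and the concentration of $f_n$ near $\pi$ (choosing $1-r_n\ll\delta_n$), one gets $e^{-2\pi\widetilde{\omega_n}}\approx1$ on essentially all the mass of $f_n$, whence $\|g_n\|_{L_1}\to1$ (rescale by a factor $\to1$ to enforce $\le1$) and $\|g_n-f_n\|_{L_1}\to0$. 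Finally,
$$
\operatorname{Re}\langle f_n^+,g_n^+\rangle_{H_2}=\frac1{2\pi}\int_{\mathbb{T}}f_n\,e^{-\pi\widetilde{\omega_n}}\cos(\pi\omega_n)\,d\theta\longrightarrow-1,
$$
because on the mass of $f_n$ we have $\omega_n=1$, hence $\cos(\pi\omega_n)=-1$ and $e^{-\pi\widetilde{\omega_n}}\approx1$, while the complementary region carries negligible $f_n$-mass. Thus $\|g_n^+-f_n^+\|_{H_2}\to2$, and a suitable joint choice of $\delta_n$ and $r_n$ makes every error term $\le1/n$, yielding $\|g_n^+-f_n^+\|_{H_2}\ge2-1/n$.

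The main obstacle is the tension built into the second and third steps: forcing $\arg(g_n^+/f_n^+)\approx\pi$ is exactly forcing the conjugate function to be large, which pulls against the requirement that $\log g_n-\log f_n$ be small in $L_1$; moreover a sharp (discontinuous) choice of $\omega_n$ would make $\widetilde{\omega_n}$ blow up logarithmically at the endpoints of $J_n$ and destroy the integrability of $g_n=f_n\,e^{-2\pi\widetilde{\omega_n}}$. Both difficulties are resolved at once by smoothing the bump (so $\widetilde{\omega_n}$ is bounded and still small in $L_1$) and by concentrating $f_n$ strictly inside the flat top $J_n$, where the phase is exactly opposite and the endpoint pathologies of $\widetilde{\omega_n}$ carry no weight.
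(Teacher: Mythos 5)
Your construction is correct, and while it rests on the same underlying mechanism as the paper's proof, the concrete realization is genuinely different. Both arguments reduce the problem to producing $h_n=g_n/f_n$ whose logarithm is small while its harmonic conjugate reaches $\pm 2\pi$ on a set carrying almost all of the mass of $f_n$, so that $f_n^+$ and $g_n^+$ have opposite phases where it matters. The paper takes $h_n=\exp(\mathrm{Re}\,w_n)$ for a conformal map $w_n$ onto a very thin, very tall ellipse; this gives $\|\log h_n\|_{L_\infty}\le\varepsilon_n$, so $\|1-h_n\|_{L_\infty}\le\varepsilon_n$ and the bounds on $\|f_n-g_n\|_{L_1}$ and $\|\log f_n-\log g_n\|_{L_1}$ hold trivially \emph{regardless} of where $f_n$ is supported; the density $f_n$ is then produced abstractly by $L_1$--$L_\infty$ duality (with a small additive regularization to guarantee $\log f_n\in L_1$), and the lower bound for $\|f_n^+-g_n^+\|_{H_2}^2$ comes from a general inequality extracted from the proof of the main positive theorem. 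You instead take $\log h_n=-2\pi\widetilde{\omega_n}$ for a smooth bump $\omega_n$ (so $(\log h_n)^\sim=2\pi\omega_n$ up to a constant) and an explicit Poisson-kernel density; this buys an explicit, self-contained pair $(f_n,g_n)$ with $\log f_n\in L_1$ for free, but it costs you an extra localization step that the paper's ellipse avoids: your $\log h_n$ is small only in $L_1$, not in $L_\infty$, so you must verify both that $\widetilde{\omega_n}$ is small on the concentration set of $f_n$ (via the symmetry $\widetilde{\omega_n}(\pi)=0$, the gradient bound $O(1/\delta_n)$ inside the flat top, and $1-r_n\ll\delta_n$) and that the transition region, where $e^{-2\pi\widetilde{\omega_n}}$ is merely $O(1)$, carries $f_n$-mass $O((1-r_n)/\delta_n)$. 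You identify exactly this tension and resolve it correctly, so the argument goes through; the only remaining differences (direct expansion of the inner product rather than the paper's prepackaged lower bound, explicit versus duality-based choice of $f_n$) are matters of presentation rather than substance.
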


Nevertheless one can still obtain an estimate for $\|g^+ - f^+\|_{H_2}$ if one takes into account that for each
$f \in L_1(\mathbb{T})$  there exists an Orlicz space $L_\Psi(\mathbb{T})$ such that $f \in L_\Psi(\mathbb{T})$
(see, e.g., \cite[$\S$8]{orlicz2}).
One can show that there exists a function $\Pi_\Psi : [0, +\infty) \to [0, +\infty)$ such that
$\lim_{t \to 0} \Pi_\Psi(t) = 0$ and
\begin{equation}\label{0rlicz}
\|f^+ - g^+\|_{H_2}^2 \le 2\|f - g\|_{L_1} +  \|f\|_{\Psi}\, \Pi_\Psi\left(\|\log f - \log g\|_{L_1}\right)
\end{equation}
(see Theorem \ref{main} below).
The estimate becomes particularly simple if $f \in L_\infty(\mathbb{T})$.

\begin{theorem}\label{No2}
Let $f$ and $g$ be arbitrary spectral densities for which $f^+$ and $g^+$ exist. Then
\begin{equation*}\label{10}
\|f^+ - g^+\|_{H_2}^2 \le 2\|f - g\|_{L_1} + 2.5\, \|f\|_{L_\infty} \|\log f - \log g\|_{L_1}.
\end{equation*}
\end{theorem}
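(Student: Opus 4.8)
The plan is to turn the $H_2$ distance into a boundary integral, split off an \emph{amplitude} contribution that is governed by $\|f-g\|_{L_1}$ from a \emph{phase} contribution governed by $\|f\|_{L_\infty}\|\log f-\log g\|_{L_1}$, and then dispose of the phase contribution by an estimate on the conjugation operator. First I would use the boundary representation $f^+=\sqrt f\,\exp\!\big(\tfrac i2\,\widetilde{\log f}\big)$ (and the analogous one for $g^+$) recorded in the introduction. Writing $u:=\log f-\log g$ and normalising $\|\cdot\|_{H_2}^2=\tfrac1{2\pi}\int_0^{2\pi}|\cdot|^2\,d\theta$, so that $\|f^+\|_{H_2}^2=\|f\|_{L_1}$, and noting $\arg f^+-\arg g^+=\tfrac12\big(\widetilde{\log f}-\widetilde{\log g}\big)=\tfrac12\widetilde u$ a.e., this gives the exact identity
\begin{equation*}
\|f^+-g^+\|_{H_2}^2=\frac1{2\pi}\int_0^{2\pi}\Big(f+g-2\sqrt{fg}\,\cos\tfrac{\widetilde u}{2}\Big)\,d\theta .
\end{equation*}

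Next I would introduce the phase through a triangle inequality on the circle. Factoring out $e^{i\arg g^+}$ and using $|\,\sqrt f\,e^{i\widetilde u/2}-\sqrt g\,|\le \sqrt f\,|e^{i\widetilde u/2}-1|+|\sqrt f-\sqrt g|$ together with $(a+b)^2\le 2a^2+2b^2$ and $|e^{i\widetilde u/2}-1|^2=2\big(1-\cos\tfrac{\widetilde u}{2}\big)$, I obtain the pointwise bound
\begin{equation*}
|f^+-g^+|^2\le 2(\sqrt f-\sqrt g)^2+4f\Big(1-\cos\tfrac{\widetilde u}{2}\Big).
\end{equation*}
Integrating, the elementary inequality $(\sqrt f-\sqrt g)^2\le|f-g|$ handles the first term and produces exactly $2\|f-g\|_{L_1}$, while in the second term $f\le\|f\|_{L_\infty}$ reduces the whole problem to the scalar conjugation estimate
\begin{equation*}
\frac1{2\pi}\int_0^{2\pi}\Big(1-\cos\tfrac{\widetilde u}{2}\Big)\,d\theta\le \tfrac58\,\|u\|_{L_1},
\end{equation*}
since $4\cdot\tfrac58=2.5$. (Using the triangle inequality rather than the exact identity is what forces the weight $f$, hence $\|f\|_{L_\infty}$ rather than $\|g\|_{L_\infty}$, and is the reason the amplitude term comes with the factor $2$.)

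The hard part is precisely this last inequality, because conjugation $u\mapsto\widetilde u$ is \emph{unbounded} on $L_1(\bT)$: one cannot control $\|\widetilde u\|_{L_1}$ by $\|u\|_{L_1}$, so the crude bound $1-\cos\tfrac{\widetilde u}{2}\le\tfrac12|\widetilde u|$ is hopeless and genuine cancellation must be exploited. The key features that save the day are that $1-\cos\tfrac{\widetilde u}{2}$ is bounded by $2$ and vanishes quadratically at $\widetilde u=0$, so that only the distribution function of $\widetilde u$ matters. Concretely I would combine the envelope $1-\cos\tfrac{\widetilde u}{2}\le\min\{2,\tfrac18\widetilde u^2\}$, the inclusion $\{1-\cos\tfrac{\widetilde u}{2}>s\}\subset\{|\widetilde u|\ge 2\arccos(1-s)\}$, and Kolmogorov's weak-type $(1,1)$ inequality $\tfrac1{2\pi}\big|\{|\widetilde u|>\lambda\}\big|\le \tfrac{C}{\lambda}\|u\|_{L_1}$ inside the layer-cake formula $\tfrac1{2\pi}\int\big(1-\cos\tfrac{\widetilde u}{2}\big)=\int_0^2\tfrac1{2\pi}\big|\{1-\cos\tfrac{\widetilde u}{2}>s\}\big|\,ds$; the singularity as $s\to0$ is integrable because $2\arccos(1-s)\sim 2\sqrt{2s}$, which is exactly what makes the estimate \emph{linear} in $\|u\|_{L_1}$, while the trivial bound $\le2$ covers the regime of large $\|u\|_{L_1}$. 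Carefully tracking the constants through this computation is where the numerical factor is pinned down. An alternative and probably cleaner route, consistent with the general Orlicz estimate \eqref{0rlicz} of Theorem~\ref{main}, is to bound $\tfrac1{2\pi}\int f\big(1-\cos\tfrac{\widetilde u}{2}\big)$ by Hölder's inequality in the Orlicz pairing and then specialise to $L_\infty$, whose associate space is $L_1$ and where the modulus $\Pi_\Psi$ degenerates to a linear function; I expect this specialisation to yield the constant $2.5$ most transparently.
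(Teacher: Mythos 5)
Your overall architecture is the same as the paper's: pass to the boundary representation, split an amplitude term controlled by $\|f-g\|_{L_1}$ from a phase term $\int f\,(1-\cos\frac{\widetilde u}{2})$, and control the latter by a layer-cake argument combined with Kolmogorov's weak type $(1,1)$ inequality (this is precisely the content of the paper's Lemmas \ref{1-1} and \ref{L1}). However, there is a genuine quantitative gap at the decomposition step, and it propagates into a sub-lemma that your own method cannot deliver. By replacing the exact identity $f+g-2\sqrt{fg}\,\cos\frac{\widetilde u}{2}=(\sqrt f-\sqrt g)^2+2\sqrt{fg}\,(1-\cos\frac{\widetilde u}{2})$ with the triangle inequality followed by $(A+B)^2\le 2A^2+2B^2$, you place the coefficient $4$ (rather than $2$) in front of $\int f\,\bigl(1-\cos\frac{\widetilde u}{2}\bigr)$, and hence you need the conjugation estimate with constant $\frac58$. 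The method you sketch for that estimate --- layer cake plus the weak-type bound $m\{|\widetilde u|\ge\lambda\}\le \frac{K}{\lambda}\|u\|_{L_1}$ with Davis's $K\approx 1.347$ --- gives, after the substitution $s=1-\cos\lambda$, exactly $\int_{-\pi}^{\pi}\bigl(1-\cos\frac{\widetilde u(\vartheta)}{2}\bigr)\,d\vartheta\le \frac{K}{2}\int_0^\pi\frac{\sin\lambda}{\lambda}\,d\lambda\,\|u\|_{L_1}=K_0\|u\|_{L_1}$ with $K_0\approx 1.247$, i.e.\ precisely twice the $\frac58$ you require; since Davis's constant is best possible in the weak-type inequality, this route gives no better. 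Your proof as written therefore yields $2\|f-g\|_{L_1}+4K_0\|f\|_{L_\infty}\|\log f-\log g\|_{L_1}$ with $4K_0\approx 4.99$, i.e.\ the constant $5$ rather than $2.5$, and the asserted inequality with constant $\frac58$ is left unproved.

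The paper avoids the factor-of-two loss by never invoking the triangle inequality: starting from the exact expansion it writes $\sqrt{fg}=f+\sqrt f\,(\sqrt g-\sqrt f)$, bounds the cross term by $4\int\sqrt f\,(\sqrt g-\sqrt f)_+$ using only $0\le 1-\cos\le 2$, and absorbs it together with $\int(\sqrt f-\sqrt g)^2$ into $2\|f-g\|_{L_1}$ via the elementary inequality $(a^{1/2}-b^{1/2})^2+2b^{1/2}(a^{1/2}-b^{1/2})_+\le|a-b|$. This keeps the phase contribution at $2\int f\,\bigl(1-\cos\frac{\widetilde u}{2}\bigr)\le 2\|f\|_{L_\infty}K_0\|\log f-\log g\|_{L_1}$ with $2K_0<2.5$, by Lemma \ref{L1} applied to $\psi=\frac12(\log f-\log g)$. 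If you adopt this decomposition, the remainder of your argument (your layer-cake estimate, which is the paper's Lemma \ref{L1}) closes the proof; as it stands, the step from your pointwise bound to the stated constant $2.5$ is a genuine gap.
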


The paper is organized as follows. In Sect.~\ref{sec:1}, we prove \eqref{0rlicz} and Theorem \ref{No2}. Theorem \ref{No1} is proved in Sect.~\ref{sec:2}.

This paper is  a preliminary step towards the investigation of  similar problems in the more complicated matrix case, which is going to be the subject of a forthcoming paper.

\section{Positive Results}\label{sec:1}
Let $K$ be the best constant in Kolmogorov's weak type $(1, 1)$ inequality
$$
m \{\vartheta \in [-\pi, \pi) : \ |\widetilde{\psi}(\vartheta)| \ge \lambda\} \leq \frac{K}{\lambda}\int_{-\pi}^\pi|\psi(\vartheta)|\,d\vartheta,\;\;\lambda>0,\;\;\psi\in L_1[-\pi,\pi),
$$
where $m$ stands for the Lebesgue measure on the real line. It is known that $K=(1+3^{-2}+5^{-2}+\ldots)/(1-3^{-2}+5^{-2}-\ldots)\approx  1.347$
(see \cite{Davis1}).

\begin{lemma}\label{1-1}
Let $G : [0, +\infty) \to [0, +\infty)$ be a bounded absolutely continuous function that attains its maximum at point
$a \in (0, +\infty)$ $($and possibly elsewhere$)$ and is nondecreasing on $[0, a]$. Suppose $G(0) = 0$ and
$$
I(G) := \int_0^a G'(\lambda)\, \frac{d\lambda}{\lambda} < +\infty .
$$
Then
$$
\int_{-\pi}^\pi
G\left(\left|\widetilde{\psi}(\vartheta)\right|\right) d\vartheta \le K I(G) \|\psi\|_{L_1}.
$$
\end{lemma}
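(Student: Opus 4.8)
The plan is to reduce the whole statement to Kolmogorov's weak type $(1,1)$ inequality by means of a layer-cake decomposition of $G$ written through its derivative, followed by an interchange of integrals.

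First I would record the pointwise bound
$$
G(x) \le \int_0^a G'(\lambda)\,\mathbf{1}_{\{x \ge \lambda\}}\,d\lambda, \qquad x \ge 0 .
$$
This follows directly from the hypotheses. Since $G$ is absolutely continuous with $G(0)=0$, the fundamental theorem of calculus gives $G(x)=\int_0^x G'(\lambda)\,d\lambda$. If $x \le a$, the right-hand side above equals $\int_0^x G'(\lambda)\,d\lambda = G(x)$, so equality holds; if $x > a$, it equals $\int_0^a G'(\lambda)\,d\lambda = G(a)$, and since $G$ attains its maximum at $a$ we have $G(x) \le G(a)$. In both cases the inequality is valid. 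I would also note at this stage that, because $G$ is nondecreasing on $[0,a]$, its derivative satisfies $G'(\lambda)\ge 0$ for a.e. $\lambda \in [0,a]$; this sign condition is exactly what legitimizes both the layer-cake bound and the interchange of integrals below.

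Next I would substitute $x = |\widetilde{\psi}(\vartheta)|$ and integrate over $\vartheta \in [-\pi,\pi)$, obtaining
$$
\int_{-\pi}^\pi G\bigl(|\widetilde{\psi}(\vartheta)|\bigr)\,d\vartheta \le \int_{-\pi}^\pi \int_0^a G'(\lambda)\,\mathbf{1}_{\{|\widetilde{\psi}(\vartheta)| \ge \lambda\}}\,d\lambda\,d\vartheta .
$$
The integrand is nonnegative, so Tonelli's theorem permits swapping the order of integration, and the inner integral over $\vartheta$ is then precisely the distribution function that appears in Kolmogorov's inequality:
$$
\int_{-\pi}^\pi G\bigl(|\widetilde{\psi}(\vartheta)|\bigr)\,d\vartheta \le \int_0^a G'(\lambda)\, m\{\vartheta : |\widetilde{\psi}(\vartheta)| \ge \lambda\}\,d\lambda .
$$

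Finally I would insert the weak type $(1,1)$ estimate $m\{\vartheta : |\widetilde{\psi}(\vartheta)| \ge \lambda\} \le K\lambda^{-1}\|\psi\|_{L_1}$, which may be used as an upper bound because $G'(\lambda)\ge 0$ on $[0,a]$. This yields
$$
\int_{-\pi}^\pi G\bigl(|\widetilde{\psi}(\vartheta)|\bigr)\,d\vartheta \le K\|\psi\|_{L_1}\int_0^a \frac{G'(\lambda)}{\lambda}\,d\lambda = K\, I(G)\, \|\psi\|_{L_1},
$$
which is the assertion. I do not anticipate a genuine obstacle: the argument amounts to integrating Kolmogorov's inequality against the measure $dG$ on $[0,a]$. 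The only points demanding care are the treatment of the range $x > a$ in the pointwise bound, handled by the maximum property of $G$ at $a$, and the application of Tonelli's theorem, both resting on $G' \ge 0$ on $[0,a]$; the finiteness of the resulting upper bound is guaranteed by the standing assumption $I(G) < +\infty$.
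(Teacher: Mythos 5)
Your proof is correct and follows essentially the same route as the paper: a layer-cake representation of $G$ via $G'$ on $[0,a]$, an interchange of integrals to arrive at $\int_0^a G'(\lambda)\,\mu_\psi(\lambda)\,d\lambda$, and then Kolmogorov's weak type $(1,1)$ bound on the distribution function. The paper organizes the first step by splitting the integral over $\{|\widetilde{\psi}|<a\}$ and its complement rather than by a single pointwise inequality, but this is only a cosmetic difference.
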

\begin{proof}
Let
$$
\mu_\psi(\lambda) := \left|\left\{\vartheta \in [-\pi, \pi] : \ \left|\widetilde{\psi}(\vartheta)\right| \ge \lambda\right\}\right| .
$$
Using Kolmogorov's weak type (1, 1) estimate with constant $K$, one gets
\begin{eqnarray*}
&& \int_{-\pi}^\pi G\left(\left|\widetilde{\psi}(\vartheta)\right|\right) d\vartheta \le
\int_{\left|\widetilde{\psi}(\vartheta)\right| < a} G\left(\left|\widetilde{\psi}(\vartheta)\right|\right)\, d\vartheta +
G(a) \mu_\psi(a) \\
&& = \int_{\left|\widetilde{\psi}(\vartheta)\right| < a} \int_0^{\left|\widetilde{\psi}(\vartheta)\right|} G'(\lambda)\, d\lambda\, d\vartheta +
G(a) \mu_\psi(a) \\
&& = \int_0^a G'(\lambda) (\mu_\psi(\lambda) - \mu_\psi(a))\, d\lambda + G(a) \mu_\psi(a) \\
&& =  \int_0^a G'(\lambda) \mu_\psi(\lambda)\, d\lambda - G(a) \mu_\psi(a) + G(a) \mu_\psi(a)  \\
&& = \int_0^a G'(\lambda) \mu_\psi(\lambda)\, d\lambda
\le K \|\psi\|_{L_1} \int_0^a G'(\lambda)\, \frac{d\lambda}{\lambda} = K I(G) \|\psi\|_{L_1}\, .
\end{eqnarray*}
\end{proof}

We need some notation from the theory of Orlicz  spaces (see \cite{orlicz2}, \cite{orlicz3}).
 Let $\Phi$ and
$\Psi$ be mutually complementary $N$-functions, i.e.
$$
\Phi(x)=\int_0^{|x|}u(t)\,dt \;\;\;\text{ and }\;\;\; \Psi(x)=\int_0^{|x|}v(t)\,dt,
$$
where $u:[0,\infty)\longrightarrow [0,\infty)$ is a right continuous, nondecreasing function with
$u(0)=0$ and $u(\infty):=\lim_{t\to\infty}u(t)=\infty$, and $v$ is defined by the equality $v(x)=\sup_{u(t)\leq x}t$.
Let $(\Omega, \Sigma, \mu)$ be a measure space, and let $L_\Phi(\Omega)$,
$L_\Psi(\Omega)$ be the corresponding Orlicz spaces, i.e.  $L_\Phi(\Omega)$ is the set of measurable functions on $\Omega$ for which either of the following  norms
$$
\|f\|_{\Phi} = \sup\left\{\left|\int_\Omega f g d\mu\right| : \
\int_\Omega \Psi(g) d\mu \le 1\right\}
$$
and
$$
\|f\|_{(\Phi)} = \inf\left\{\kappa > 0 : \
\int_\Omega \Phi\left(\frac{f}{\kappa}\right) d\mu \le 1\right\}
$$
is finite.  Note that these two norms are equivalent, namely  (see, e.g., \cite[(9.24)]{orlicz2} or \cite[\S 3.3, (14)]{orlicz3})
\begin{equation*}\label{Luxemburgequiv}
\|f\|_{(\Phi)} \le \|f\|_{\Phi} \le 2 \|f\|_{(\Phi)}\, , \ \ \ \forall f \in L_\Phi(\Omega) .
\end{equation*}
We will use the following H\"older inequality (see, e.g., \cite[(9.27)]{orlicz2} or \cite[\S 3.3, (16)]{orlicz3})
\begin{equation}\label{Hoel}
\left|\int_\Omega f g d\mu\right| \le \|f\|_{\Psi} \|g\|_{(\Phi)} .
\end{equation}

For an $N$-function  $\Phi$, let

\begin{equation}\label{Inv}
\Lambda_\Phi(s) := \inf\left\{t > 0 : \ \frac1t\, \Phi'\left(\frac1t\right) \le \frac1s\right\} , \ \ \ s > 0.
\end{equation}
If $\Phi'$ is continuous, the above definition of
$\Lambda_\Phi$ can be rewritten in terms of inverse functions, because
$\Phi'$ is nondecreasing.  For an arbitrary $N$-function $\Phi$, one has
$$
\tau \Phi'(\tau) \le \int_\tau^{2\tau} \Phi'(x)\, dx = \Phi(2\tau) - \Phi(\tau) < \Phi(2\tau) .
$$
Hence
\begin{equation*}
\Lambda_\Phi(s) \le  \frac2{\Phi^{-1}\left(\frac1s\right)}\, , \ \ \ s > 0.
\end{equation*}
It is clear that
\begin{equation}\label{sh17}
\Lambda_\Phi(s) \to 0 \ \mbox{ as } s \to 0+.
\end{equation}
Also,
\begin{equation}\label{p}
\Phi(\tau) \equiv \tau^q/q , \ 1 < q < \infty  \ \ \ \Longrightarrow \ \ \ \Lambda_\Phi(s) \equiv s^{1/q} .
\end{equation}

\begin{lemma}\label{Orl}
For every $N$-function $\Phi$, the following estimate holds
\begin{equation}\label{js1}
\left\|1 - \cos \widetilde{\psi}\right\|_{(\Phi)} \le 2 \Lambda_\Phi\left(K_0\|\psi\|_{L_1}\right)\, , \ \ \ \forall \psi \in L_1 ,
\end{equation}
where
\begin{equation}\label{K0}
K_0 := \frac{K}{2}\, \int_0^\pi  \frac{\sin \lambda}\lambda\, d\lambda < 1.25
\end{equation}
and $K$ is the same as in Lemma $\ref{1-1}$.
\end{lemma}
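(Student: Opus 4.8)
The plan is to unwind the Luxemburg norm and reduce everything to a single application of Lemma \ref{1-1}. By the definition of $\|\cdot\|_{(\Phi)}$, it suffices to prove that for every $\kappa > 2\Lambda_\Phi\left(K_0\|\psi\|_{L_1}\right)$ one has $\int_{-\pi}^\pi \Phi\left(\frac{1-\cos\widetilde\psi(\vartheta)}{\kappa}\right) d\vartheta \le 1$; passing to the infimum over such $\kappa$ then gives \eqref{js1}. So I would fix such a $\kappa$, write $s := K_0\|\psi\|_{L_1}$ and $t := \kappa/2$ (so that $t > \Lambda_\Phi(s)$), and set $G(\lambda) := \Phi\left(\frac{1-\cos\lambda}{\kappa}\right)$, intending to feed $G$ into Lemma \ref{1-1}.

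The first task is to check that $G$ meets the hypotheses of Lemma \ref{1-1}. Since $\lambda \mapsto 1 - \cos\lambda$ is smooth, nondecreasing on $[0,\pi]$ and bounded by $2$, while $\Phi$ is convex, nonnegative, nondecreasing, locally Lipschitz with $\Phi(0)=0$, the composite $G$ is bounded by $\Phi(2/\kappa)$, absolutely continuous, nondecreasing on $[0,\pi]$, vanishes at $0$, and attains its maximum at $a = \pi$ (and at $3\pi, 5\pi, \dots$, which the lemma explicitly allows). Its derivative is $G'(\lambda) = \Phi'\left(\frac{1-\cos\lambda}{\kappa}\right)\frac{\sin\lambda}{\kappa}$ almost everywhere. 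The key estimate is the bound on $I(G) = \int_0^\pi G'(\lambda)\,\frac{d\lambda}{\lambda}$: using that $\Phi'$ is nondecreasing together with $1-\cos\lambda \le 2$, I would replace $\Phi'\left(\frac{1-\cos\lambda}{\kappa}\right)$ by the constant $\Phi'(2/\kappa)$, obtaining
\begin{equation*}
I(G) \le \frac{\Phi'(2/\kappa)}{\kappa}\int_0^\pi \frac{\sin\lambda}{\lambda}\, d\lambda = \frac12\cdot\frac{2}{\kappa}\,\Phi'\!\left(\frac{2}{\kappa}\right)\int_0^\pi \frac{\sin\lambda}{\lambda}\, d\lambda .
\end{equation*}

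To close the argument I would invoke \eqref{Inv}: since $h(t) := \frac1t\Phi'(1/t)$ is nonincreasing (because $\Phi'$ is nondecreasing), the set $\{t : h(t) \le 1/s\}$ is an upward‑closed ray with infimum $\Lambda_\Phi(s)$, so $t > \Lambda_\Phi(s)$ forces $\frac{2}{\kappa}\Phi'\!\left(\frac{2}{\kappa}\right) = h(t) \le \frac1s$. Combining this with the displayed bound, Lemma \ref{1-1}, and the value $K_0 = \frac K2\int_0^\pi\frac{\sin\lambda}{\lambda}\, d\lambda$ from \eqref{K0} yields
\begin{equation*}
\int_{-\pi}^\pi G\left(\left|\widetilde\psi(\vartheta)\right|\right) d\vartheta \le K\, I(G)\, \|\psi\|_{L_1} \le K_0\,\frac{2}{\kappa}\,\Phi'\!\left(\frac{2}{\kappa}\right)\|\psi\|_{L_1} \le K_0\,\frac1s\,\|\psi\|_{L_1} = 1 ,
\end{equation*}
the final equality being the definition $s = K_0\|\psi\|_{L_1}$, which is exactly what was needed.

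The main obstacle I anticipate is not any single hard step but the careful bookkeeping: one must verify all the structural hypotheses of Lemma \ref{1-1} for the composite function $G$ (in particular that its maximum sits at $a = \pi$ and that $I(G)$ is finite), and one must match the elementary integral $\int_0^\pi \frac{\sin\lambda}{\lambda}\, d\lambda$ precisely with the constant $K_0$ in \eqref{K0}. The only genuinely delicate point is the endpoint behaviour of the infimum defining $\Lambda_\Phi$, and this is deliberately sidestepped by establishing the bound for every $\kappa$ strictly larger than $2\Lambda_\Phi(s)$ and only then passing to the infimum defining $\|\cdot\|_{(\Phi)}$, which gives $\left\|1-\cos\widetilde\psi\right\|_{(\Phi)} \le 2\Lambda_\Phi(s)$ without needing equality to hold at $t = \Lambda_\Phi(s)$ itself.
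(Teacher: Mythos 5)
Your proposal is correct and follows essentially the same route as the paper: both apply Lemma \ref{1-1} to $G(\lambda)=\Phi\bigl(\frac{1-\cos\lambda}{\kappa}\bigr)$ with $a=\pi$, bound $I(G)$ by $\frac1\kappa\Phi'\bigl(\frac2\kappa\bigr)\int_0^\pi\frac{\sin\lambda}{\lambda}\,d\lambda$ using the monotonicity of $\Phi'$, and conclude via the definition \eqref{Inv} for every $\kappa>2\Lambda_\Phi\bigl(K_0\|\psi\|_{L_1}\bigr)$. The extra details you supply (verifying the hypotheses of Lemma \ref{1-1} for $G$ and the monotonicity of $t\mapsto\frac1t\Phi'(1/t)$) are accurate and merely make explicit what the paper leaves implicit.
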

\begin{proof}
We will use Lemma \ref{1-1} with
$$
G(\lambda) = \Phi\left(\frac{1 - \cos\lambda}{\kappa}\right)
$$
and $a = \pi$. We have
\begin{eqnarray*}
I(G) = \frac1\kappa \int_0^\pi \Phi'\left(\frac{1 - \cos\lambda}{\kappa}\right) \sin \lambda\, \frac{d\lambda}{\lambda} \le
\frac1\kappa\, \Phi'\left(\frac{2}{\kappa}\right) \int_0^\pi  \frac{\sin \lambda}\lambda\, d\lambda\, .
\end{eqnarray*}
Hence
\begin{equation}\label{20+}
\int_{-\pi}^\pi \Phi\left(\frac{1 - \cos\widetilde{\psi}(\vartheta)}{\kappa}\right) d\vartheta \le  \frac2\kappa\, \Phi'\left(\frac{2}{\kappa}\right)
\frac{K}{2}\, \int_0^\pi  \frac{\sin \lambda}\lambda\, d\lambda\, \|\psi\|_{L_1}\, .
\end{equation}
Taking $\kappa>2 \Lambda_\Phi\left(K_0\|\psi\|_{L_1}\right)$, we observe that the right-hand side of inequality \eqref{20+} does not exceed 1 by virtue of the definition \eqref{Inv}. Thus \eqref{js1} follows.
\end{proof}

\begin{lemma}\label{L1}
\begin{equation}\label{1L}
\left\|1 - \cos \widetilde{\psi}\right\|_{L_1} \le 2 K_0\|\psi\|_{L_1} , \ \ \ \forall \psi \in L_1 ,
\end{equation}
where $K_0$ is defined by \eqref{K0}.
\end{lemma}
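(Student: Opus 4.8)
The plan is to apply Lemma~\ref{1-1} directly with the explicit choice $G(\lambda) = 1 - \cos\lambda$ and $a = \pi$, which makes the desired inequality fall out immediately. This is the $L_1$ analogue of Lemma~\ref{Orl}, but since $L_1$ is not an Orlicz space generated by an $N$-function, \eqref{1L} cannot simply be read off as a formal $q \to 1$ limit of \eqref{p}; it must be derived from Lemma~\ref{1-1} by hand.

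First I would verify that $G(\lambda) = 1 - \cos\lambda$, regarded as a function on $[0, +\infty)$, satisfies all the hypotheses of Lemma~\ref{1-1}. It is smooth, hence absolutely continuous, and bounded with $0 \le G \le 2$; it satisfies $G(0) = 0$; since $G'(\lambda) = \sin\lambda \ge 0$ on $[0, \pi]$, it is nondecreasing on $[0, \pi]$; and it attains its global maximum value $2$ at $a = \pi$ (and, as the clause ``possibly elsewhere'' permits, at every odd multiple of $\pi$). The quantity
$$
I(G) = \int_0^\pi G'(\lambda)\, \frac{d\lambda}{\lambda} = \int_0^\pi \frac{\sin\lambda}{\lambda}\, d\lambda
$$
is finite, since $\sin\lambda/\lambda$ extends continuously to $[0, \pi]$ with a removable singularity at the origin.

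Next, observe that since $\widetilde{\psi}(\vartheta)$ is finite a.e.\ for $\psi \in L_1$, the composition $1 - \cos\widetilde{\psi}$ is well defined a.e.\ and nonnegative, and, because $\cos$ is even,
$$
\left\|1 - \cos\widetilde{\psi}\right\|_{L_1} = \int_{-\pi}^\pi \left(1 - \cos\widetilde{\psi}(\vartheta)\right) d\vartheta = \int_{-\pi}^\pi G\left(\left|\widetilde{\psi}(\vartheta)\right|\right) d\vartheta .
$$
Lemma~\ref{1-1} then yields
$$
\int_{-\pi}^\pi G\left(\left|\widetilde{\psi}(\vartheta)\right|\right) d\vartheta \le K\, I(G)\, \|\psi\|_{L_1} = K \left(\int_0^\pi \frac{\sin\lambda}{\lambda}\, d\lambda\right) \|\psi\|_{L_1} .
$$
Recalling the definition $K_0 = \frac{K}{2}\int_0^\pi \frac{\sin\lambda}{\lambda}\, d\lambda$ from \eqref{K0}, the right-hand side equals $2 K_0 \|\psi\|_{L_1}$, which is exactly \eqref{1L}.

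I do not anticipate any genuine obstacle: the entire analytic content is already packaged in Lemma~\ref{1-1}, and the work here reduces to the bookkeeping of checking the hypotheses for the specific $G$ and evaluating $I(G)$, both of which are routine. The only point requiring a moment's care is the one noted above, namely that \eqref{1L} is a genuinely separate statement from Lemma~\ref{Orl} and must be obtained from Lemma~\ref{1-1} rather than by specializing the Orlicz estimate.
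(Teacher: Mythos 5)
Your proposal is correct and follows exactly the paper's own proof: apply Lemma~\ref{1-1} with $G(\lambda) = 1 - \cos\lambda$ and $a = \pi$, compute $I(G) = \int_0^\pi \frac{\sin\lambda}{\lambda}\,d\lambda$, and identify the resulting constant $K\,I(G)$ as $2K_0$. The extra verification of the hypotheses of Lemma~\ref{1-1} is routine but correct.
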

\begin{proof}
We will use Lemma \ref{1-1} with
$$
G(\lambda) = 1 - \cos\lambda
$$
and $a = \pi$. We have
\begin{eqnarray*}
I(G) =  \int_0^\pi  \frac{\sin \lambda}\lambda\, d\lambda
\end{eqnarray*}
and
$$
\left\|1 - \cos \widetilde{\psi}\right\|_{L_1} =
\int_{-\pi}^\pi \left(1 - \cos\widetilde{\psi}(\vartheta)\right) d\vartheta \le
K \int_0^\pi  \frac{\sin \lambda}\lambda\, d\lambda\, \|\psi\|_{L_1}\, ,
$$
which implies \eqref{1L}.
\end{proof}

\begin{theorem}\label{main}
For every pair $\Phi$ and
$\Psi$ of mutually complementary $N$-functions, the following estimate holds
\begin{equation*}\label{js2}
\|f^+ - g^+\|_{H_2}^2 \le 2\|f - g\|_{L_1} + 4 \|f\|_{\Psi}\, \Lambda_\Phi\left(\frac{K_0}2\, \|\log f - \log g\|_{L_1}\right)\, ,
\end{equation*}
where $K_0$ is the same as in Lemma $\ref{Orl}$.
\end{theorem}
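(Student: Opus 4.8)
The plan is to start from the boundary--value representation $f^+=\sqrt{f}\,\exp\!\big(\tfrac{i}{2}\widetilde{\log f}\,\big)$ and the analogous one for $g^+$, so that $|f^+|^2=f$, $|g^+|^2=g$, and $\operatorname{Re}\big(f^+\overline{g^+}\big)=\sqrt{fg}\,\cos\!\big(\tfrac12\widetilde{\psi}\,\big)$, where $\psi:=\log f-\log g$. Expanding $|f^+-g^+|^2=f+g-2\sqrt{fg}\,\cos(\tfrac12\widetilde\psi)$ and substituting the identity $2\sqrt{fg}=f+g-(\sqrt f-\sqrt g)^2$ yields the pointwise formula
$$
|f^+-g^+|^2=2f\Big(1-\cos\tfrac12\widetilde\psi\Big)+(\sqrt f-\sqrt g)^2\cos\tfrac12\widetilde\psi+(g-f)\Big(1-\cos\tfrac12\widetilde\psi\Big).
$$

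The next step is to absorb the last two terms into $2|f-g|$ pointwise. Writing $c:=\cos(\tfrac12\widetilde\psi)\in[-1,1]$ and using the elementary bound $(\sqrt f-\sqrt g)^2\le|f-g|$, one has $(\sqrt f-\sqrt g)^2\,c\le|f-g|\,c^+$ and $(g-f)(1-c)\le|f-g|(1-c)$; since $c^++(1-c)=1+c^-\le2$, the two terms together do not exceed $2|f-g|$. Hence
$$
|f^+-g^+|^2\le 2f\Big(1-\cos\tfrac12\widetilde\psi\Big)+2|f-g|\qquad\text{a.e. on }\mathbb{T}.
$$
Integrating over $[-\pi,\pi)$ and recalling that $\|f^+-g^+\|_{H_2}$ equals the $L_2$--norm of the boundary difference gives $\|f^+-g^+\|_{H_2}^2\le 2\int f\big(1-\cos\tfrac12\widetilde\psi\big)+2\|f-g\|_{L_1}$, which already produces the first summand on the right-hand side of the asserted estimate.

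It remains to estimate $\int f\big(1-\cos\tfrac12\widetilde\psi\big)$. Here I would apply the H\"older inequality \eqref{Hoel} with the complementary pair $\Phi,\Psi$, and then Lemma \ref{Orl} to the function $\tfrac12\psi=\tfrac12(\log f-\log g)$ (using that $\tfrac12\widetilde\psi=\widetilde{(\psi/2)}$ by linearity of conjugation):
$$
\int f\Big(1-\cos\tfrac12\widetilde\psi\Big)\le\|f\|_{\Psi}\,\big\|1-\cos\widetilde{(\psi/2)}\big\|_{(\Phi)}\le 2\|f\|_{\Psi}\,\Lambda_\Phi\!\Big(K_0\big\|\tfrac12\psi\big\|_{L_1}\Big)=2\|f\|_{\Psi}\,\Lambda_\Phi\!\Big(\tfrac{K_0}{2}\|\log f-\log g\|_{L_1}\Big).
$$
Multiplying by $2$ and combining with the previous display gives exactly the claimed inequality.

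The main obstacle is obtaining the sharp constant $4$ (rather than $8$) in front of the Orlicz term. The naive route is to split $f^+-g^+=\sqrt f\,e^{\,i\widetilde\psi/2}\big(e^{\,i\widetilde\psi/2}\cdot\!\!\;^{-1}\cdots\big)+(\sqrt f-\sqrt g)e^{\,i\widetilde{\log g}/2}$ and apply $|a+b|^2\le 2|a|^2+2|b|^2$; this recovers the correct $2\|f-g\|_{L_1}$ but doubles the coefficient of $f(1-\cos\tfrac12\widetilde\psi)$ from $2$ to $4$, hence yields $8\|f\|_\Psi\Lambda_\Phi$. Avoiding that loss forces one to keep the exact expansion, trade $\sqrt{fg}$ for $f$ through $2\sqrt{fg}=f+g-(\sqrt f-\sqrt g)^2$, and control the genuine cross term; the crux is then the observation $c^++(1-c)\le2$, which bounds the residual by precisely $2|f-g|$ instead of the crude $3|f-g|$, so that no accuracy is lost on the $L_1$ side while the factor $2$ is saved on the Orlicz side.
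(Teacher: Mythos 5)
Your proof is correct and follows essentially the same route as the paper: the same expansion of $\|f^+-g^+\|_{H_2}^2$ via the boundary representation (your pointwise identity is an algebraic regrouping of the paper's three-term decomposition), followed by the same application of the H\"older inequality \eqref{Hoel} and Lemma \ref{Orl} with $\psi=\tfrac12(\log f-\log g)$. The only difference is cosmetic: where the paper absorbs the residual into $2\|f-g\|_{L_1}$ via the inequality $\left(a^{1/2}-b^{1/2}\right)^2+2b^{1/2}\left(a^{1/2}-b^{1/2}\right)_+\le|a-b|$ after bounding $1-\cos\le 2$, you use $(\sqrt f-\sqrt g)^2\le|f-g|$ together with $c^++(1-c)\le 2$, which yields the same constant.
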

\begin{proof}
\begin{eqnarray*}
&& \|f^+ - g^+\|_{H_2}^2 = \|f^+\|_{H_2}^2 + \|g^+\|_{H_2}^2 \\
&& - 2\, \mathrm{Re} \int_{-\pi}^\pi f^{1/2}(\vartheta) g^{1/2}(\vartheta)
\exp\left(\frac{i}{2}\, (\log f(\vartheta) - \log g(\vartheta))^\sim \right)\, d\vartheta\\
&& = \|f^{1/2}\|_{L_2}^2 + \|g^{1/2}\|_{L_2}^2  -2 \int_{-\pi}^\pi f^{1/2}(\vartheta) g^{1/2}(\vartheta)\, d\vartheta\\
&& +  2  \int_{-\pi}^\pi f^{1/2}(\vartheta) g^{1/2}(\vartheta)
\left(1 - \cos\Big(\frac{1}{2}\, (\log f(\vartheta) - \log g(\vartheta))^\sim \Big)\right)\, d\vartheta \\
&& = \left\|(f^{1/2} - g^{1/2})^2\right\|_{L_1} \\
&& +  2  \int_{-\pi}^\pi f^{1/2}(\vartheta) \left(g^{1/2}(\vartheta) - f^{1/2}(\vartheta)\right)
\left(1 - \cos\Big(\frac{1}{2}\, (\log f(\vartheta) - \log g(\vartheta))^\sim \Big)\right)\, d\vartheta \\
&& +  2  \int_{-\pi}^\pi f(\vartheta)
\left(1 - \cos\Big(\frac{1}{2}\, (\log f(\vartheta) - \log g(\vartheta))^\sim \Big)\right)\, d\vartheta \\
&& \le \int_{-\pi}^\pi \left(g^{1/2}(\vartheta) - f^{1/2}(\vartheta)\right)^2\, d\vartheta \\
&& +  4  \int_{-\pi}^\pi f^{1/2}(\vartheta) \left(g^{1/2}(\vartheta) - f^{1/2}(\vartheta)\right)_+\, d\vartheta \\
&& +  2  \int_{-\pi}^\pi f(\vartheta)
\left(1 - \cos\Big(\frac{1}{2}\, (\log f(\vartheta) - \log g(\vartheta))^\sim \Big)\right)\, d\vartheta .
\end{eqnarray*}
Here and below, for $x\in\mathbb{R}$, we define $x_+:=\max(x,0)$.

Using the H\"older inequality \eqref{Hoel} and the elementary inequality
$$
\left(a^{1/2} - b^{1/2}\right)^2 + 2b^{1/2}\left(a^{1/2} - b^{1/2}\right)_+ \le |a - b| , \ \ \ \forall a, b \ge 0 ,
$$
which is easily proved by considering the cases $a \ge b$ and $a < b$ separately, one gets
$$
\|f^+ - g^+\|_{H_2}^2 \le 2 \|f - g\|_{L_1} + 2\|f\|_{\Psi} \left\|1 - \cos \Big(\frac{1}{2}\, (\log f - \log g)^\sim \Big)\right\|_{(\Phi)}\, .
$$
It is now left to apply Lemma \ref{Orl} with
$
\psi = \frac{1}{2}\, (\log f - \log g)\, .
$
\end{proof}

Since every integrable function belongs to a certain Orlicz  space (see \cite[\S 8]{orlicz2}), Theorem \ref{main} with an appropriate
pair $\Phi$ and $\Psi$ of mutually complementary $N$-functions applies to any
nonnegative integrable function $f$ with an integrable logarithm. The condition \eqref{sh17} is fulfilled as well.

\begin{corollary}
For every $p \in (1, \infty)$, there exists a constant $C(p)$ such that
\begin{equation*}
\|f^+ - g^+\|_{H_2}^2 \le 2 \|f - g\|_{L_1} + C(p) \|f\|_{L_p} \|\log f - \log g\|_{L_1}^{\frac{p - 1}{p}}\, .
\end{equation*}
One can take $C(p) = 2^{\frac{p + 1}{p}} K_0^{\frac{p - 1}{p}}\left(\frac{p}{p-1}\right)^{\frac{p - 1}{p}}$, where $K_0$ is defined by \eqref{K0}.
\end{corollary}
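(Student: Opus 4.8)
The plan is to read the Corollary off Theorem~\ref{main} by specializing to the power $N$-functions
$$
\Psi(x) = \frac{|x|^p}{p}\,, \qquad \Phi(\tau) = \frac{|\tau|^q}{q}\,, \qquad q := \frac{p}{p-1}\,,
$$
so that $\tfrac1p + \tfrac1q = 1$. First I would confirm that this is an admissible complementary pair in the sense used in the text: from $\Psi'(x) = |x|^{p-1}\,\mathrm{sgn}\,x$ the associated nondecreasing function is $v(t) = t^{p-1}$, whose generalized inverse $u(x) = \sup_{v(t)\le x} t = x^{1/(p-1)}$ integrates to $\Phi(\tau) = \tau^{q}/q$ with $q = p/(p-1)$, matching the hypothesis of \eqref{p}. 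With this identification the $\Lambda_\Phi$-term is immediate: formula \eqref{p} gives $\Lambda_\Phi(s) = s^{1/q} = s^{(p-1)/p}$, whence
$$
\Lambda_\Phi\!\left(\frac{K_0}{2}\,\|\log f - \log g\|_{L_1}\right) = \left(\frac{K_0}{2}\right)^{\!(p-1)/p} \|\log f - \log g\|_{L_1}^{(p-1)/p}\,.
$$

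The one point that needs a short computation is the Orlicz norm $\|f\|_\Psi$, and here I would evaluate it exactly from the $\sup$-definition rather than invoke the cruder Luxemburg bound $\|f\|_\Psi \le 2\|f\|_{(\Psi)}$, since only the exact value reproduces the sharp constant $C(p)$. By definition $\|f\|_\Psi = \sup\{|\int_\Omega fg\,d\mu| : \int_\Omega \Phi(g)\,d\mu \le 1\}$; the constraint $\frac1q\int|g|^q\,d\mu \le 1$ reads $\|g\|_{L_q} \le q^{1/q}$, so by the $L_p$–$L_q$ Hölder inequality (with equality attained at the extremal $g$, available since $f\ge0$) the supremum equals $q^{1/q}\|f\|_{L_p}$. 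Thus
$$
\|f\|_\Psi = q^{1/q}\|f\|_{L_p} = \left(\frac{p}{p-1}\right)^{\!(p-1)/p}\|f\|_{L_p}\,.
$$

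Finally I would substitute both evaluations into the estimate of Theorem~\ref{main} and collect the numerical factors in the second term. Its coefficient becomes
$$
4\left(\frac{p}{p-1}\right)^{\!(p-1)/p}\!\left(\frac{K_0}{2}\right)^{\!(p-1)/p} = 2^{(p+1)/p}\, K_0^{(p-1)/p}\left(\frac{p}{p-1}\right)^{\!(p-1)/p},
$$
where I use $4\cdot 2^{-(p-1)/p} = 2^{2-(p-1)/p} = 2^{(p+1)/p}$; this is precisely the asserted $C(p)$. There is no real obstacle—the result is a direct corollary—so the only genuine care is to use the exact Orlicz-norm value in the middle step, as the weaker Luxemburg estimate would inflate the constant and spoil the stated form of $C(p)$.
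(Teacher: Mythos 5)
Your proposal is correct and follows the paper's own route: specialize Theorem \ref{main} to $\Psi(t)=t^p/p$, $\Phi(t)=t^q/q$ with $q=p/(p-1)$, use \eqref{p} for $\Lambda_\Phi$, and use the exact Orlicz-norm identity $\|f\|_{\Psi}=q^{1/q}\|f\|_{L_p}$ (which the paper simply cites as (9.7) of Krasnosel\cprime ski\u{\i}--Ruticki\u{\i} and you derive directly from the supremum definition). Your arithmetic collecting the constants into $C(p)=2^{(p+1)/p}K_0^{(p-1)/p}\left(\frac{p}{p-1}\right)^{(p-1)/p}$ checks out, and your observation that the crude bound $\|f\|_{\Psi}\le 2\|f\|_{(\Psi)}$ must be avoided to obtain the stated constant is apt.
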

\begin{proof}
It is sufficient to take $\Psi(t) \equiv t^p/p$,  $\Phi(t) \equiv t^q/q$,  $1 < p < \infty$,  $q = p/(p - 1)$ \ in Theorem \ref{main} and to
apply \eqref{p} and \cite[(9.7)]{orlicz2}.
\end{proof}

\begin{corollary}
There exists a constant $C$ such that
\begin{equation*}
\|f^+ - g^+\|_{H_2}^2 \le 2\|f - g\|_{L_1} + C \|f\|_{L_\infty} \|\log f - \log g\|_{L_1}\, .
\end{equation*}
One can take $C = 2 K_0 < 2.5$, where $K_0$ is defined by \eqref{K0}.
\end{corollary}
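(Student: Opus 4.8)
The plan is to imitate the proof of Theorem \ref{main}, but with the Orlicz H\"older step replaced by the trivial $L_\infty$--$L_1$ pairing and with Lemma \ref{Orl} replaced by Lemma \ref{L1}. One cannot simply substitute $q = 1$, i.e.\ $\Phi(t) \equiv t$, into Theorem \ref{main}: such a $\Phi$ is not an $N$-function (its derivative does not tend to $+\infty$, so $u(\infty)=\infty$ fails), and hence neither $\Lambda_\Phi$ nor the H\"older inequality \eqref{Hoel} is available. Consequently the final stage of the argument has to be redone directly.

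First I would run the chain of (in)equalities in the proof of Theorem \ref{main} verbatim up to the point where $\|f^+ - g^+\|_{H_2}^2$ is bounded by the sum of three integrals and where the first two of them are combined, via the elementary inequality quoted there, into $2\|f - g\|_{L_1}$. The only term still needing attention is the third one,
$$
2 \int_{-\pi}^\pi f(\vartheta)\left(1 - \cos\Big(\tfrac{1}{2}\,(\log f(\vartheta) - \log g(\vartheta))^\sim\Big)\right) d\vartheta .
$$
Here, instead of invoking \eqref{Hoel}, I would bound $f(\vartheta) \le \|f\|_{L_\infty}$ pointwise and pull the constant out, obtaining the upper bound $2\|f\|_{L_\infty}\,\big\|1 - \cos\big(\tfrac12(\log f - \log g)^\sim\big)\big\|_{L_1}$.

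It then remains to apply Lemma \ref{L1} with $\psi = \tfrac{1}{2}(\log f - \log g)$, which gives $\big\|1 - \cos\big(\tfrac12(\log f - \log g)^\sim\big)\big\|_{L_1} \le 2K_0\cdot\tfrac12\|\log f - \log g\|_{L_1} = K_0\|\log f - \log g\|_{L_1}$. Collecting the two contributions yields the asserted inequality with $C = 2K_0$, and the numerical bound $K_0 < 1.25$ from \eqref{K0} gives $C < 2.5$.

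There is no serious obstacle here, since the statement is a corollary; the one point requiring care is precisely the observation above that the $L_\infty$ case is a genuine endpoint falling outside the scope of Theorem \ref{main}, so it must be handled either by this direct computation or, alternatively, by letting $p \to \infty$ in the preceding ($L_p$) corollary. In the latter approach one would verify that $\|f\|_{L_p} \to \|f\|_{L_\infty}$, that $\|\log f - \log g\|_{L_1}^{(p-1)/p} \to \|\log f - \log g\|_{L_1}$, and that the constant $C(p) = 2^{(p+1)/p} K_0^{(p-1)/p}(p/(p-1))^{(p-1)/p}$ tends to $2K_0$. I would prefer the direct argument, as it avoids the limiting procedure and delivers the sharp constant $2K_0$ immediately.
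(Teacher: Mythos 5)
Your proposal is correct and follows essentially the same route as the paper: the authors likewise extract from the proof of Theorem \ref{main} the bound $\|f^+ - g^+\|_{H_2}^2 \le 2\|f-g\|_{L_1} + 2\|f\|_{L_\infty}\bigl\|1-\cos\bigl(\tfrac12(\log f - \log g)^\sim\bigr)\bigr\|_{L_1}$ and then apply Lemma \ref{L1} with $\psi = \tfrac12(\log f - \log g)$ to obtain $C = 2K_0 < 2.5$. Your additional remarks on why the $L_\infty$ endpoint falls outside the scope of Theorem \ref{main} and on the alternative $p\to\infty$ limit are sound but not needed.
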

\begin{proof}
It follows from the proof of Theorem \ref{main} that
$$
\|f^+ - g^+\|_{H_2}^2 \le  2\|f - g\|_{L_1} + 2\|f\|_{L_\infty} \left\|1 - \cos \Big(\frac{1}{2}\, (\log f - \log g)^\sim \Big)\right\|_{L_1}\, .
$$
It is now left to apply Lemma \ref{L1}.
\end{proof}

\section{Negative Results}
\label{sec:2}

In this section we prove Theorem \ref{No1}.
\begin{proof}
It follows from the proof of Theorem \ref{main} that for any $f, g \ge 0$ one has
\begin{eqnarray*}
&& \|f^+ - g^+\|_{H_2}^2 =  \left\|(f^{1/2} - g^{1/2})^2\right\|_{L_1} \\
&& +  2  \int_{-\pi}^\pi f^{1/2}(\vartheta) \left(g^{1/2}(\vartheta) - f^{1/2}(\vartheta)\right)
\left(1 - \cos\Big(\frac{1}{2}\, (\log f(\vartheta) - \log g(\vartheta))^\sim \Big)\right)\, d\vartheta \\
&& +  2  \int_{-\pi}^\pi f(\vartheta)
\left(1 - \cos\Big(\frac{1}{2}\, (\log f(\vartheta) - \log g(\vartheta))^\sim \Big)\right)\, d\vartheta \\
&& \ge  - 4  \int_{-\pi}^\pi f^{1/2}(\vartheta) \left|g^{1/2}(\vartheta) - f^{1/2}(\vartheta)\right|\, d\vartheta \\
&& +  2  \int_{-\pi}^\pi f(\vartheta)
\left(1 - \cos\Big(\frac{1}{2}\, (\log f(\vartheta) - \log g(\vartheta))^\sim \Big)\right)\, d\vartheta \\
&& \ge  2  \int_{-\pi}^\pi f(\vartheta)
\left(1 - \cos\Big(\frac{1}{2}\, (\log f(\vartheta) - \log g(\vartheta))^\sim \Big)\right)\, d\vartheta - 4 \|f - g\|_{L_1}\, .
\end{eqnarray*}

Let $w_n$ be a conformal mapping of
the unit disk onto the ellipse with the axes
$$
[-\varepsilon_n, 0]  \ \  \mbox{ and } \ \  -\varepsilon_n/2 + i[-2\pi, 2\pi]  ,
$$
such that $w_n(0) = -\varepsilon_n/2$, where $\varepsilon_n = \frac1{2\pi n}$.
Let $h_n := \exp(\mathrm{Re}\, w_n)$. Then $(\log h_n)^\sim = (\mathrm{Re}\, w_n)^\sim = \mathrm{Im}\, w_n$ and therefore $\|1 - \cos(\frac{1}{2}\, (\log h_n)^\sim )\|_{L_\infty}=2$.

Due to duality considerations, there exists $f^0_n \ge 0$ such that $\|f^0_n\|_{L_1} = 1$ and
\begin{eqnarray*}
\int_{-\pi}^\pi f^0_n(\vartheta)
\left(1 - \cos\Big(\frac{1}{2}\, (\log h_n(\vartheta))^\sim \Big)\right)\, d\vartheta \\
\ge \left(1 - \frac{\varepsilon_n}2\right)
\left\|1 - \cos\Big(\frac{1}{2}\, (\log h_n)^\sim \Big)\right\|_{L_\infty}\, .
\end{eqnarray*}
If $\log f^0_n \in L_1$, we take $f_n = f^0_n$. Otherwise we define $f_n = (1 - \varepsilon_n/2) f^0_n + \varepsilon_n/4\pi$.
Then $\|f_n\|_{L_1} = 1$, and $(1 - \varepsilon_n/2)^2 > 1 - \varepsilon_n$ implies
\begin{eqnarray*}
\int_{-\pi}^\pi f_n(\vartheta)
\left(1 - \cos\Big(\frac{1}{2}\, (\log h_n(\vartheta))^\sim \Big)\right)\, d\vartheta \\
\ge \left(1 - \varepsilon_n\right)
\left\|1 - \cos\Big(\frac{1}{2}\, (\log h_n)^\sim \Big)\right\|_{L_\infty}\, .
\end{eqnarray*}
Finally, let $g_n = h_n f_n$. Then $0 \le g_n \le f_n$, $\|g_n\|_{L_1} \le 1$,
\begin{eqnarray*}
&& \|f_n - g_n\|_{L_1} = \|f_n(1 - h_n)\|_{L_1} \le \|1 - h_n\|_{L_\infty} \le 1 - e^{-\varepsilon_n} \le \varepsilon_n < \frac1{2n}\, , \\
&& \|\log f_n - \log g_n\|_{L_1} = \|\log h_n\|_{L_1}\le 2\pi \|\log h_n\|_{L_\infty} = 2\pi \varepsilon_n = \frac1n\, ,
\end{eqnarray*}
and
\begin{eqnarray*}
&& \|f_n^+ - g_n^+\|_{H_2}^2  \ge  2  (1 - \varepsilon_n)
\left\|1 - \cos\Big(\frac{1}{2}\, (\log h_n)^\sim \Big)\right\|_{L_\infty} - 4 \|f - g\|_{L_1} \\
&& > 4(1 - \varepsilon_n) - \frac2n > 4 \left(1 - \frac1{4 n}\right) - \frac2n  \ge \left(2 - \frac1n\right)^2 .
\end{eqnarray*}
\end{proof}

{\bf Remark.} \ The norms  $\|\log f_n\|_{L_1}$ and $\|\log g_n\|_{L_1}$ might not be bounded in Theorem \ref{No1}.
Let $f^0_n$ be the function from the above proof. Changing the definition of $f_n$ in the proof to $f_n = f^0_n + 1$,
one can change the estimates $\|f_n\|_{L_1}, \|g_n\|_{L_1} \le 1$ in the theorem for $\|f_n\|_{L_1} = 2\pi + 1$,
$\|g_n\|_{L_1} \le 2\pi + 1$, $\|\log f_n\|_{L_1} \le 1$.

%\begin{acknowledgements}
%If you'd like to thank anyone, place your comments here
%and remove the percent signs.
%\end{acknowledgements}

% BibTeX users please use one of
%\bibliographystyle{spbasic}      % basic style, author-year citations
%\bibliographystyle{spmpsci}      % mathematics and physical sciences
%\bibliographystyle{spphys}       % APS-like style for physics
%\bibliography{}   % name your BibTeX data base
%\bibliography{Ephremidze}

\begin{thebibliography}{10}
\providecommand{\url}[1]{{#1}}
\providecommand{\urlprefix}{URL }
\expandafter\ifx\csname urlstyle\endcsname\relax
  \providecommand{\doi}[1]{DOI~\discretionary{}{}{}#1}\else
  \providecommand{\doi}{DOI~\discretionary{}{}{}\begingroup
  \urlstyle{rm}\Url}\fi

\bibitem{MR827391}
Anderson, B.D.O.: Continuity of the spectral factorization operation.
\newblock Mat. Apl. Comput. \textbf{4}(2), 139--156 (1985)

\bibitem{MR0335000}
Anderson, B.D.O., Moore, J.B.: Linear optimal control.
\newblock Prentice-Hall, Inc., Englewood Cliffs, N.J. (1971)

\bibitem{MR2096876}
Barclay, S.: Continuity of the spectral factorization mapping.
\newblock J. London Math. Soc. (2) \textbf{70}(3), 763--779 (2004).
%\newblock \doi{10.1112/S0024610704005575}.
%\newblock \urlprefix\url{http://dx.doi.org/10.1112/S0024610704005575}

\bibitem{MR2663312}
Bart, H., Gohberg, I., Kaashoek, M.A., Ran, A.C.M.: A state space approach to
  canonical factorization with applications, \emph{Operator Theory: Advances
  and Applications}, vol. 200.
\newblock Birkh\"auser Verlag, Basel; Birkh\"auser Verlag, Basel (2010).
%\newblock \doi{10.1007/978-3-7643-8753-2}.
%\newblock \urlprefix\url{http://dx.doi.org/10.1007/978-3-7643-8753-2}.
\newblock Linear Operators and Linear Systems

\bibitem{Boche}
Boche, H., Pohl, V.: Behavior of the spectral factorization for continuous
  spectral densities.
\newblock Signal Processing \textbf{87}(5), 1078--1088 (2007)

\bibitem{MR2446566}
Boche, H., Pohl, V.: Spectral factorization for polynomial spectral
  densities---impact of dimension.
\newblock IEEE Trans. Inform. Theory \textbf{53}(11), 4236--4241 (2007).
%\newblock \doi{10.1109/TIT.2007.907446}.
%prefix\url{http://dx.doi.org/10.1109/TIT.2007.907446}

\bibitem{Boche2}
Boche, H., Pohl, V.: Robustness of the spectral factorization for polynomials.
\newblock 7th International ITG Conference on Source and Channel Coding, SCC
  2008. CD-ROM pp. 1--6 (2008)

\bibitem{MR1162107}
Daubechies, I.: Ten lectures on wavelets, \emph{CBMS-NSF Regional Conference
  Series in Applied Mathematics}, vol.~61.
\newblock Society for Industrial and Applied Mathematics (SIAM), Philadelphia,
  PA (1992).
%\newblock \doi{10.1137/1.9781611970104}.
%\newblock \urlprefix\url{http://dx.doi.org/10.1137/1.9781611970104}

\bibitem{Davis1}
Davis, B.: On the weak type {$(1,\,1)$} inequality for conjugate functions.
\newblock Proc. Amer. Math. Soc. \textbf{44}, 307--311 (1974)

\bibitem{MR2838115}
Ephremidze, L., Janashia, G., Lagvilava, E.: On approximate spectral
  factorization of matrix functions.
\newblock J. Fourier Anal. Appl. \textbf{17}(5), 976--990 (2011).
%\newblock \doi{10.1007/s00041-010-9167-9}.
%\newblock \urlprefix\url{http://dx.doi.org/10.1007/s00041-010-9167-9}

\bibitem{fisher}
Fischer, R.F.H.: Precoding and Signal Shaping for Digital Transmission.
\newblock Wiley-IEEE Press, Piscataway, NJ (2002)

\bibitem{MR932459}
Francis, B.A.: A course in {$H_\infty$} control theory, \emph{Lecture Notes in
  Control and Information Sciences}, vol.~88.
\newblock Springer-Verlag, Berlin (1987).
%\newblock \doi{10.1007/BFb0007371}.
%\newblock \urlprefix\url{http://dx.doi.org/10.1007/BFb0007371}

\bibitem{MR0102720}
Gohberg, I.C., Kre{\u\i}n, M.G.: Systems of integral equations on the half-line
  with kernels depending on the difference of the arguments.
\newblock Uspehi Mat. Nauk (N.S.) \textbf{13}(2 (80)), 3--72 (1958)

\bibitem{MR1855307}
Jacob, B., Partington, J.R.: On the boundedness and continuity of the spectral
  factorization mapping.
\newblock SIAM J. Control Optim. \textbf{40}(1), 88--106 (electronic) (2001).
%\newblock \doi{10.1137/S036301299935184X}.
%\newblock \urlprefix\url{http://dx.doi.org/10.1137/S036301299935184X}

\bibitem{Kai99}
Kailath, T., Hassibi, B., Sayed, A.H.: Linear Estimation.
\newblock Prentice-Hall, Inc., Englewood Cliffs, N.J. (1999).
\newblock Prentice-Hall Information and System Sciences Series

\bibitem{MR0009098}
Kolmogoroff, A.N.: Stationary sequences in {H}ilbert's space.
\newblock Bolletin Moskovskogo Gosudarstvenogo Universiteta. Matematika
  \textbf{2}, 40pp (1941)

\bibitem{orlicz2}
Krasnosel{\cprime}ski{\u\i}, M.A., Ruticki{\u\i}, J.B.: Convex functions and
  {O}rlicz spaces.
\newblock Translated from the first Russian edition by Leo F. Boron. P.
  Noordhoff Ltd., Groningen (1961)

\bibitem{orlicz3}
Rao, M.M., Ren, Z.D.: Theory of {O}rlicz spaces, \emph{Monographs and Textbooks
  in Pure and Applied Mathematics}, vol. 146.
\newblock Marcel Dekker, Inc., New York (1991).
%\newblock \doi{10.1080/03601239109372748}.
%\newblock \urlprefix\url{http://dx.doi.org/10.1080/03601239109372748}

\bibitem{MR1712468}
Resnikoff, H.L., Wells Jr., R.O.: Wavelet analysis.
\newblock The scalable structure of information.
\newblock Springer-Verlag, New York (1998).
%\newblock \doi{10.1007/978-1-4612-0593-7}.
%\newblock \urlprefix\url{http://dx.doi.org/10.1007/978-1-4612-0593-7}.


\bibitem{MR1411910}
Strang, G., Nguyen, T.: Wavelets and filter banks.
\newblock Wellesley-Cambridge Press, Wellesley, MA (1996)

\bibitem{MR0031213}
Wiener, N.: Extrapolation, {I}nterpolation, and {S}moothing of {S}tationary
  {T}ime {S}eries. {W}ith {E}ngineering {A}pplications.
\newblock The Technology Press of the Massachusetts Institute of Technology,
  Cambridge, Mass; John Wiley \& Sons, Inc., New York, N. Y.; Chapman \& Hall,
  Ltd., London (1949)

\end{thebibliography}
% Non-BibTeX users please use
%\begin{thebibliography}{}
%
% and use \bibitem to create references. Consult the Instructions
% for authors for reference list style.
%

\def\cprime{$'$}

\end{document}